\documentclass[12pt]{amsart}

\usepackage{amsmath,amssymb,amsthm}
\usepackage{mathtools}
\usepackage{xcolor}
\usepackage{hyperref}
\usepackage{geometry}
\newtheorem{theorem}{Theorem}[section]
\newtheorem{lemma}[theorem]{Lemma}

\newtheorem{proposition}[theorem]{Proposition}
\theoremstyle{definition}

\newtheorem{remark}[theorem]{Remark}
\newtheorem{conjecture}[theorem]{Conjecture}

\newcommand{\floor}[1]{\lfloor #1 \rfloor}
\newcommand{\Z}{\mathbb{Z}}

\DeclareMathOperator{\PQ}{PQ}

\title{Anomalous Partial Quotients in the Continued Fraction
       of $\sqrt{\zeta(3)-S_N}$}
\author{David Victor Feldman}
\date{July 4, 2026}

\begin{document}

\begin{abstract}
Let $S_N = \sum_{j=1}^N j^{-3}$ and $R_N = \zeta(3) - S_N$.
The simple continued fraction of $\sqrt{R_N}$ has partial quotients
of generic size $O(N)$.  We prove that at the sequence of indices
$N_k = (Q_{2k+1}-1)/2$, where $Q_{2k+1}$ are companion Pell numbers,
the continued fraction begins
\[
  \sqrt{R_{N_k}}
  = \bigl[0;\; M_k-1,\; 1,\; 6M_k^3+12M_k-2,\; 1,\; \ldots\,\bigr],
\]
with $M_k = P_{2k+1}$ (Pell numbers), and the third partial quotient
grows cubically while generic ones are linear.  We determine all partial
quotients through the fifth:
\begin{align*}
  \PQ_0 &= M_k - 1, &
  \PQ_2 &= 6M_k^3 + 12M_k - 2, &
  \PQ_4 &= \Bigl\lfloor\frac{10M_k - 261}{261}\Bigr\rfloor,\\
  \PQ_1 &= 1, &
  \PQ_3 &= 1, &
  \PQ_5 &= \Bigl\lfloor\frac{261}{r_k}\Bigr\rfloor + \epsilon_k,
\end{align*}
where $r_k = (10M_k) \bmod 261$ satisfies the recurrence
$r_{k+1} \equiv 6r_k - r_{k-1} \pmod{261}$, and $\epsilon_k = -1$ at
the $k$ with $r_k \mid 261$ (the two residue classes $k \equiv 57,
62 \pmod{60}$), and $\epsilon_k = 0$ otherwise.

All six formulas follow from the Euler--Maclaurin expansion of
$1/\sqrt{R_{N_k}}$, carried to sufficient precision, combined with the
Pell identity $Q_{2k+1}^2 - 2M_k^2 = -1$.  The delicate first step,
$\PQ_0 = M_k - 1$, is proved by rationalizing the irrational factor
$\sqrt{2}$ in the Euler--Maclaurin expansion; we complement this proof
with a heuristic derivation via Gosper's bihomographic
continued-fraction algorithm that exposes the underlying mechanism.

All claimed results have been formalized in LEAN with the aid of Aristotle.
\end{abstract}

\maketitle
\tableofcontents

\section{Background and statement of results}
\label{sec:intro}

\subsection{Pell numbers and the index sequence}

The Pell numbers $P_k$ are defined by $P_0=0$, $P_1=1$,
$P_k = 2P_{k-1}+P_{k-2}$, giving
$0, 1, 2, 5, 12, 29, 70, 169, 408, 985, 2378, 5741, \ldots$
The companion Pell numbers $Q_k = P_k + P_{k-1}$:
$1, 1, 3, 7, 17, 41, 99, 239, 577, 1393, 3363, 8119, \ldots$
satisfy
\begin{equation}\label{eq:pell}
  Q_k^2 - 2P_k^2 = (-1)^k.
\end{equation}
The odd-index identity $Q_{2k+1}^2 - 2M_k^2 = -1$, with $M_k = P_{2k+1}$,
is the key algebraic input throughout.  Setting $N_k = (Q_{2k+1}-1)/2$,
so that $2N_k + 1 = Q_{2k+1}$, the Pell identity gives
\begin{equation}\label{eq:pell-Nk}
  2N_k^2 + 2N_k + 1 = M_k^2.
\end{equation}
The sequence $N_k = 3, 20, 119, 696, 4059, 23660, \ldots$ satisfies
the recurrence $N_{k+1} = 6N_k - N_{k-1} + 2$ and grows at rate
$(1+\sqrt{2})^2 \approx 5.83$ per step.

\subsection{Main theorem}

\begin{theorem}\label{thm:main}
  For all $k \geq 3$, the simple continued fraction of $\sqrt{R_{N_k}}$
  begins with the six partial quotients
  \begin{equation}\label{eq:main-cf}
    \bigl[0;\; M_k-1,\; 1,\; 6M_k^3+12M_k-2,\; 1,\;
    \Bigl\lfloor\tfrac{10M_k-261}{261}\Bigr\rfloor,\;
    \Bigl\lfloor\tfrac{261}{r_k}\Bigr\rfloor + \epsilon_k,\;
    \ldots\,\bigr],
  \end{equation}
  where $r_k = (10M_k)\bmod 261$ and $\epsilon_k = -1$ if
  $r_k \mid 261$, $\epsilon_k = 0$ otherwise.
\end{theorem}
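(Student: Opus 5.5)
The plan is to work throughout with $x_k = 1/\sqrt{R_{N_k}}$, so that $\sqrt{R_{N_k}} = [0; x_k]$ and the listed partial quotients are the successive floors in the continued fraction expansion of $x_k$. The first step is the Euler--Maclaurin expansion of the tail,
\[
  R_N = \sum_{j>N} j^{-3} = \frac{1}{2N^2} - \frac{1}{2N^3} + \frac{1}{4N^4} - \frac{1}{12N^6} + O(N^{-8}).
\]
I would rewrite it in the variable $u = N + \tfrac12$, because then $2N^2+2N+1 = 2u^2 + \tfrac12$ and \eqref{eq:pell-Nk} becomes $2u^2 + \tfrac12 = M_k^2$. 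In this variable the expansion is even in $u$:
\[
  R_N = \frac{1}{2u^2} + \frac{1}{8u^4} + \frac{c}{u^6} + O(u^{-8}),
\]
with an explicit rational $c$. From this I obtain
\[
  x_k = \sqrt2\,u\bigl(1 - \tfrac{1}{8u^2} + \alpha u^{-4} + \beta u^{-6} + \cdots\bigr).
\]
The expansion has to be carried far enough, roughly four or five terms, that the final error is $o(M_k^{-7})$. The reason is that the later quotients probe $x_k$ to relative precision about $M_k^{-3}\cdot M_k^{-1}\cdots$, and the product of the first few convergent denominators is of order $M_k^{5}$. The error term is made rigorous with an explicit Euler--Maclaurin remainder bound valid for $N \ge N_3 = 119$; this is where the hypothesis $k \ge 3$ comes in.

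The second step removes $\sqrt2$ using the Pell identity. Setting $\sqrt2\,u = \sqrt{M_k^2 - \tfrac12} = M_k\sqrt{1 - 1/(2M_k^2)}$, everything becomes a Laurent series in $M_k$ with rational coefficients:
\[
  x_k = M_k - \frac{a_1}{M_k} - \frac{a_3}{M_k^3} - \frac{a_5}{M_k^5} - \cdots .
\]
The point of the Pell choice of $N_k$ is that the $1/(4M_k)$ coming from the square root should combine with the $-1/(8u^2)$ correction so that the $1/M_k$ term is exactly $-1/(6M_k^3)$-sized after cancellation. Concretely, I expect
\[
  x_k = M_k - \delta_k, \qquad \delta_k = \frac{1}{6M_k^3} - \frac{2}{M_k^5}\cdot(\text{const}) + \cdots ,
\]
with $0 < \delta_k < 1$. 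This gives $\PQ_0 = M_k - 1$, and the remainder is $1 - \delta_k$. Then $1/(1-\delta_k) = 1 + \delta_k + \cdots$ gives $\PQ_1 = 1$ with fractional part about $\delta_k$. Next, $1/\delta_k \approx 6M_k^3 + 12M_k + \gamma + \theta/M_k$, and the stated $\PQ_2 = 6M_k^3 + 12M_k - 2$ corresponds to the constant and lower-order part lying in $[-2,-1)$ after accounting for the $+\delta_k$ shift. This in turn yields $\PQ_3 = 1$ with leftover of order $c'/M_k$.

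The third step handles $\PQ_4$ and $\PQ_5$, which depend on the exact rational coefficient. After the $\PQ_3$ step the complete quotient should have the form $x^{(4)} = 10M_k/261 + \eta + O(M_k^{-1})$, where $261 = 9\cdot 29$ arises from the Bernoulli-number coefficients. Writing $10M_k = 261\,q + r_k$ gives $\PQ_4 = q - 1 = \floor{(10M_k - 261)/261}$, with fractional part $r_k/261 + \eta' + O(1/M_k)$. Inverting then gives $\PQ_5 = \floor{261/r_k} + \epsilon_k$. The $-1$ correction appears exactly when $261/r_k$ is an integer, since in that case the small negative $O(1/M_k)$ perturbation pushes the value just below the integer. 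The periodicity statement follows from the recurrence $M_{k+1} = 6M_k - M_{k-1}$ modulo 261.

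\textbf{Main obstacle.} I expect the hardest step to be the error control at the last stage. The tiny $O(M_k^{-1})$ terms must not flip a floor when $r_k$ is small, for example $r_k = 1$, where $261/r_k$ is large and sensitivity is maximal. The case $r_k = 0$ must also be excluded, presumably because $10M_k \equiv 0 \pmod{261}$ never occurs. To handle this, I would check the sign and size of the $1/M_k$ correction explicitly, using interval bounds from the certified Euler--Maclaurin remainder, and verify $k = 3,4,5$ numerically.
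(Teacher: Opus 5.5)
Your route is the paper's own. You expand $R_N$ by Euler--Maclaurin, use the Pell identity to turn $x_k=1/\sqrt{R_{N_k}}$ into a Laurent series in $M_k$, then run the floor-and-invert chain with a split on $r_k\mid 261$. The midpoint variable $u=N+\tfrac12$ (so $2u^2+\tfrac12=M_k^2$) is a clean way to see why only even powers survive.

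\textbf{The sign error at the key cancellation.} The step on which everything rests is wrong as written. The midpoint coefficients for $j^{-3}$ are $\tfrac{2m+1}{2}B_{2m}(\tfrac12)$ with $B_{2m}(\tfrac12)=(2^{1-2m}-1)B_{2m}$, so
\[
  R_N=\frac{1}{2u^2}-\frac{1}{8u^4}+\frac{7}{96u^6}-\frac{31}{384u^8}+\cdots .
\]
At $N=3$ this gives $0.04002$, matching $R_3\approx 0.040020$, whereas with your sign the first two terms alone give $0.0416$. Hence $x_k=\sqrt2\,u\,(1+\tfrac{1}{8u^2}+\cdots)$. Only with this plus sign does $\tfrac{\sqrt2}{8u}=\tfrac{1}{4\sqrt{M_k^2-1/2}}=\tfrac{1}{4M_k}+O(M_k^{-3})$ cancel the $-\tfrac{1}{4M_k}$ coming from $\sqrt{M_k^2-\tfrac12}$. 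With your sign the two terms add, giving $x_k=M_k-\tfrac{1}{2M_k}+O(M_k^{-3})$, and the chain would produce $\PQ_2\approx 2M_k$, not $6M_k^3$. The cancellation you invoke contradicts your own formula. It is the whole reason for the cubic quotient, so it has to be computed rather than expected.

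\textbf{The missing coefficients and the precision target.} After $\PQ_0$, the proof consists exactly of the coefficients you leave unspecified. Write $M=M_k$. One needs
\[
\varepsilon_k=M-x_k=\tfrac{1}{6M^3}-\tfrac{1}{3M^5}+\tfrac{167}{120M^7}-\tfrac{17}{2M^9}+O(M^{-11}),
\]
whence $1/\varepsilon_k=6M^3+12M-\tfrac{261}{10M}+\tfrac{768}{5M^3}+\cdots$. The coefficient $-\tfrac{261}{10}=6(4-\tfrac{167}{20})$ gives $\PQ_2=6M^3+12M-2$, $\PQ_3=1$, and $x^{(4)}=\tfrac{10M}{261}-1+\tfrac{5120}{22707M}+O(M^{-3})$, so your $\eta$ is exactly $-1$. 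The positive $\tfrac{768}{5}$ then gives $x^{(5)}=\tfrac{261}{r_k}-\tfrac{15360}{r_k^2M}+O(M^{-2})$, and it is this sign that produces $\epsilon_k=-1$.

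Your precision target $o(M_k^{-7})$ is too weak for this. An uncertainty $E$ in $x_k$ is amplified by roughly $36M^6\cdot(10M/261)^2\cdot(261/r_k)^2=3600M^8/r_k^2$ in $x^{(5)}$. So the case $r_k=29$ needs $E$ below roughly $\tfrac{64}{15}M^{-9}$. Indeed, without the $\tfrac{17}{2M^9}$ term the coefficient $\tfrac{768}{5}$ would come out as $-\tfrac{762}{5}$, and you would predict $\epsilon_k=0$.

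\textbf{Smaller points.}
\begin{itemize}
\item The hypothesis $k\ge3$ is needed because the chain requires $261/(10M_k)<\tfrac12$; at $M_2=29$ one gets $\PQ_3=9$. It is not needed for the remainder bound.
\item Case~1 is made uniform in $k$ by bounding $15360/(r_k\rho_k)$, with $\rho_k=261\bmod r_k$, over the period-$60$ orbit of $r_k$, rather than by checking only $k=3,4,5$.
\item $r_k\neq0$ holds because $3\nmid P_{2k+1}$.
\end{itemize}
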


\begin{remark}
  The quantity $r_k$ satisfies the linear recurrence
  $r_{k+1} \equiv 6r_k - r_{k-1} \pmod{261}$, which follows immediately
  from the Pell recurrence $M_{k+1} = 6M_k - M_{k-1}$.  The initial
  values are $r_3 = 124$ and $r_4 = 193$.  The condition $r_k \mid 261$
  occurs at $k \equiv 57$ and $k \equiv 62 \pmod{60}$, where $r_k = 29$.
\end{remark}

\begin{remark}\label{rem:growth}
  The six partial quotients have very different growth rates as $k \to \infty$:
  $\PQ_0 \sim M_k$, $\PQ_2 \sim 6M_k^3$ (cubic), $\PQ_4 \sim (10/261)M_k$
  (linear), while $\PQ_1 = \PQ_3 = 1$ and $\PQ_5 = \floor{261/r_k}$
  is bounded (at most $261$).  The anomalous cubic growth of $\PQ_2$
  makes these indices exceptional.
\end{remark}

\subsection{Strategy}

The proof is self-contained and proceeds in two stages.

\textit{Stage~1} (Sections \ref{sec:em}--\ref{sec:chain}) establishes
the Euler--Maclaurin expansion
\begin{equation}\label{eq:key-expansion}
  \frac{1}{\sqrt{R_{N_k}}}
  = M_k - \frac{1}{6M_k^3} + \frac{1}{3M_k^5} - \frac{167}{120M_k^7}
    + \frac{17}{2M_k^9} - \frac{217727}{3024\,M_k^{11}} + O(M_k^{-13})
\end{equation}
and deduces $\PQ_0$ through $\PQ_5$ by successive floor-and-invert steps
(Section~\ref{sec:chain}).

\textit{Stage~2} (Sections \ref{sec:gosper}--\ref{sec:rational})
addresses the hardest step, namely that
$\PQ_0 = M_k - 1$ (equivalently, that $1/\sqrt{R_{N_k}} \in (M_k-1, M_k)$).
Section~\ref{sec:rational} gives a rigorous proof by a rationalization
argument; Section~\ref{sec:gosper} gives a complementary heuristic
derivation via Gosper's bihomographic algorithm, which illuminates why
the Pell indices are special.

\section{The Euler--Maclaurin expansion at Pell indices}
\label{sec:em}

\begin{lemma}\label{lem:key}
  Expansion~\eqref{eq:key-expansion} holds for all $k \geq 1$; more
  precisely, there is an explicit constant $C$ such that for all
  $k \geq 1$,
  \[
    \left|\frac{1}{\sqrt{R_{N_k}}} -
      \Bigl(M_k - \tfrac{1}{6M_k^3} + \tfrac{1}{3M_k^5}
      - \tfrac{167}{120M_k^7} + \tfrac{17}{2M_k^9}
      - \tfrac{217727}{3024\,M_k^{11}}\Bigr)\right|
    \le \frac{C}{M_k^{13}}.
  \]
\end{lemma}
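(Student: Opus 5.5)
We write $N=N_k$ and $M=M_k$, and use the Euler--Maclaurin formula for the tail
\[
  R_N=\sum_{j>N} j^{-3}
  = \frac{1}{2N^2}-\frac{1}{2N^3}+\sum_{i=1}^{m}\frac{B_{2i}}{(2i)!}\,
    \frac{(2i+1)!}{2\,N^{2i+2}}+E_m(N).
\]
The first term is $\int_N^\infty x^{-3}\,dx$, and the second is $-\tfrac12 f(N)$ because the sum starts at $N+1$. The remainder satisfies $|E_m(N)|\le c_m N^{-2m-4}$ for an explicit constant $c_m$, by the standard bound on the periodic Bernoulli function $\tilde B_{2m+2}$ against $\int_N^\infty |f^{(2m+2)}|$.

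A cleaner way to organize the computation is to expand around the midpoint $x=N+\tfrac12$. Summing the midpoint (Boole-type) expansion $\int_{j-1/2}^{j+1/2}f=f(j)+\tfrac1{24}f''(j)+\cdots$ gives
\[
  R_N=\int_{N+1/2}^\infty x^{-3}\,dx-\frac{1}{24}f'(N+\tfrac12)+\frac{7}{5760}f'''(N+\tfrac12)-\cdots,
\]
with $f(x)=x^{-3}$. This version contains only even powers of $u=N+\tfrac12=Q_{2k+1}/2$. The Pell relation~\eqref{eq:pell-Nk} reads $4u^2=2M^2-1$, so $u^{-2}=4/(2M^2-1)$ is a rational function of $M^2$ with no $\sqrt2$ appearing. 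This explains why the coefficients in \eqref{eq:key-expansion} are rational in $M$ and involve only odd powers. We truncate at enough terms (through $u^{-14}$) that
\[
  R_N = \frac{1}{2u^2}+\frac{1}{8u^4}+a_3u^{-6}+\cdots+a_7u^{-14}+E(u), \qquad |E(u)|\le c\,u^{-16},
\]
with explicit rational $a_i$ and an explicit constant $c$ valid for all $u\ge 7/2$.

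Next we substitute $u^{-2}=2/M^2\cdot(1-1/(2M^2))^{-1}=\sum_{n\ge1}2^n(2M^2)^{-n}\cdot\tfrac12\cdot 2$, written more usefully as the convergent series $u^{-2}=\tfrac{2}{M^2}\sum_{n\ge0}(2M^2)^{-n}$. This expresses $R_N = M^{-2}\bigl(1+b_1M^{-2}+\cdots+b_5M^{-10}\bigr)+O(M^{-14})$. As a consistency check, the leading term is $\tfrac1{2u^2}=\tfrac{2}{2M^2-1}\approx M^{-2}$, consistent with $1/\sqrt{R}\approx M$. We then take $R^{-1/2}=M\,(1+X)^{-1/2}$ with $X=b_1M^{-2}+\cdots$ and expand by the binomial series to order $M^{-12}$ inside the parentheses. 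Matching coefficients should reproduce $-\tfrac16,\tfrac13,-\tfrac{167}{120},\tfrac{17}{2},-\tfrac{217727}{3024}$. This step is a finite computation in $\mathbb{Q}[M^{-2}]$, and it is worth checking symbolically, which is essentially what the Lean formalization would do. For example, the coefficient $b_1=\tfrac13$ is forced by requiring $-\tfrac12 b_1=-\tfrac16$.

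The main obstacle is making the error term uniform and explicit for all $k\ge1$, rather than merely asymptotic. We plan to handle it in three parts:
\begin{itemize}
\item[(i)] Bound the Euler--Maclaurin remainder by $c\,u^{-16}$ with an explicit $c$ using $|\tilde B_{16}|\le|B_{16}|$.
\item[(ii)] Bound the tail of the geometric substitution using $2M^2\ge 2$, with the sharper bound $2M^2\ge 50$ for $k\ge1$ since $M\ge5$.
\item[(iii)] Control the binomial-series remainder for $(1+X)^{-1/2}$ by Taylor's theorem with Lagrange remainder on $|X|\le 1/2$. This holds for $M\ge5$ because $|X|\lesssim 1/(3M^2)$.
\end{itemize}
Each part contributes $O(M^{-13})$ after the leading factor $M$, with constants computable by crude majorization. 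Adding them gives the explicit constant $C$. For the finitely many small $k$ where the crude bounds might be loose, we can absorb them into $C$, since the statement only requires some explicit constant.
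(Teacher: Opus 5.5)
Your route is different from the paper's, and the strategy is sound, but two steps fail as written.

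\textbf{How the routes compare.} The paper never uses the analytic Euler--Maclaurin remainder. It uses an exact telescoping identity $R_N=\varphi(N+1)-\sum_{j>N}D(j)$ with a sign-definite rational defect $D<0$, which gives the rational enclosure $\varphi(N+1)<R_N\le\varphi(N+1)+13858000\,N^{-20}$. It then certifies the inversion by squared polynomial inequalities with nonnegative coefficients, which suits formal verification. Your midpoint expansion in $u=Q_{2k+1}/2$, combined with $u^{-2}=4/(2M^2-1)$, makes the evenness in $1/M$ more transparent than the paper's reduction of $\varphi(N+1)$ modulo $Q^2=2M^2-1$.

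\textbf{First problem: the midpoint corrections have the wrong sign.} From $\int_{j-1/2}^{j+1/2}f=f(j)+\tfrac1{24}f''(j)+\cdots$ and $\sum_{j>N}f''(j)\approx-f'(u)$ one gets
\[
R_N=\int_u^\infty x^{-3}\,dx+\tfrac{1}{24}f'(u)-\tfrac{7}{5760}f'''(u)+\cdots=\frac{1}{2u^2}-\frac{1}{8u^4}+\frac{7}{96u^6}-\frac{31}{384u^8}+\cdots .
\]
Convexity of $x^{-3}$ already forces $R_N<1/(2u^2)$; numerically $R_3\approx 0.04002<1/24.5$. The sign matters. With $u^{-2}=\tfrac{2}{M^2}\bigl(1-\tfrac{1}{2M^2}\bigr)^{-1}$, the $M^{-4}$ coefficient of $R_{N_k}$ is $\tfrac12-\tfrac12=0$, and this cancellation is exactly why $1/\sqrt{R_{N_k}}$ has no $M^{-1}$ term. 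With your $+\tfrac{1}{8u^4}$ that coefficient becomes $1$, and you would get $M-\tfrac{1}{2M}+\cdots$, contradicting the target. Your consistency check is misaligned for the same reason: $-\tfrac12 b_1$ is the coefficient of $M^{-1}$, not $M^{-3}$. In fact $b_1=0$ and $b_2=\tfrac13$, matching $R_{N_k}=M^{-2}+\tfrac13M^{-6}-\tfrac23M^{-8}+\cdots$.

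\textbf{Second problem: the expansion in $M$ stops one order too early.} You write $R_N=M^{-2}(1+b_1M^{-2}+\cdots+b_5M^{-10})+O(M^{-14})$. But $\tfrac{d}{dR}R^{-1/2}=-\tfrac12R^{-3/2}\asymp M^3$ at $R\asymp M^{-2}$, so an $O(M^{-14})$ error in $R$ becomes an $O(M^{-11})$ error in $1/\sqrt{R_{N_k}}$. That neither determines the coefficient $-\tfrac{217727}{3024}$ nor yields the $C/M^{13}$ bound. Your claim in (iii) that each part contributes $O(M^{-13})$ is therefore false for the expansion as stated; the paper explicitly warns about this point.

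You must keep $b_6M^{-12}$ inside the parentheses, i.e.\ the paper's $\tfrac{5101}{35}M^{-14}$ term of $R$, with an $O(M^{-16})$ error. Your $u$-expansion through $u^{-14}$ with $|E(u)|\le c\,u^{-16}$ already supplies exactly this, so the fix is to carry it through the substitution rather than discard it. With both corrections, and with the coefficients actually computed rather than assumed to match, your plan proves the lemma.
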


\begin{proof}
  Write $M = M_k$, $Q = Q_{2k+1}$, $N = N_k$.  The remainder is
  controlled not by the classical Euler--Maclaurin remainder theorem but
  by an \emph{exact telescoping enclosure} of the tail, which reduces the
  estimate to elementary rational inequalities.

  \emph{The antidifference and its defect.}  Let
  \begin{equation}\label{eq:phidef}
    \varphi(j) = \frac{1}{2j^{2}}+\frac{1}{2j^{3}}+\frac{1}{4j^{4}}
      -\frac{1}{12j^{6}}+\frac{1}{12j^{8}}-\frac{3}{20j^{10}}
      +\frac{5}{12j^{12}}-\frac{691}{420j^{14}}+\frac{35}{4j^{16}}
      -\frac{3617}{60j^{18}}
  \end{equation}
  be the Euler--Maclaurin antidifference of $j^{-3}$ truncated at order
  $j^{-18}$.  Its backward difference differs from $j^{-3}$ by a single
  rational \emph{defect}
  \begin{equation}\label{eq:defectdef}
    D(j) := \varphi(j)-\varphi(j+1)-j^{-3}
          = -\frac{\mathrm{Num}(j)}{420\,j^{18}(j+1)^{18}},
  \end{equation}
  where $\mathrm{Num}(j)$ is a degree-$15$ polynomial with sixteen
  positive integer coefficients,
  \begin{align}\label{eq:Num}
    \mathrm{Num}(j)
      &= 4386700\,j^{15}+32900250\,j^{14}+128519066\,j^{13}
         +336386804\,j^{12}\notag\\
      &\quad+647615118\,j^{11}+959399980\,j^{10}+1119835402\,j^{9}
         +1041773850\,j^{8}\notag\\
      &\quad+774825162\,j^{7}+458881964\,j^{6}+213946830\,j^{5}
         +76914556\,j^{4}\notag\\
      &\quad+20594154\,j^{3}+3870132\,j^{2}+455742\,j+25319 .
  \end{align}
  Since every coefficient of $\mathrm{Num}$ is positive, $D(j)<0$ for all
  $j\ge 1$, and $|D(j)|\le 13858000\,j^{-21}$.

  \emph{The enclosure.}  As $\varphi(j)=O(j^{-2})\to 0$, the series
  $\sum_{j>N}\bigl(\varphi(j)-\varphi(j+1)\bigr)$ telescopes to
  $\varphi(N+1)$, giving the exact identity
  $R_N = \varphi(N+1) - \sum_{j>N} D(j)$.  The sign of $D$ and the
  majorant above (summed against $\sum_{j>N}j^{-21}\le N^{-20}$) yield the
  two-sided rational enclosure
  \begin{equation}\label{eq:enclosure}
    \varphi(N+1)\;<\;R_N\;\le\;\varphi(N+1)+\frac{13858000}{N^{20}},
  \end{equation}
  valid for all $N\ge 3$, in particular at every Pell index $N=N_k$.

  \emph{Passage to $M$.}  Under $N=(Q-1)/2$ with $Q^2=2M^2-1$, every power
  of $N$ reduces to an expression linear in $Q$; the $\sqrt2$-parts cancel
  identically (this is the algebraic signature of the Pell indices), and
  the rational endpoint $\varphi(N+1)$ reduces, modulo $Q^2=2M^2-1$, to a
  Laurent polynomial in $M$.  With the $O(N^{-20})$ enclosure width this
  gives
  \begin{equation}\label{eq:R-expansion}
    R_{N_k} = \frac{1}{M^2} + \frac{1}{3M^6} - \frac{2}{3M^8}
              + \frac{43}{15M^{10}} - \frac{52}{3M^{12}}
              + \frac{5101}{35M^{14}} + O(M^{-16}).
  \end{equation}
  The $M^{-14}$ term is retained deliberately: since
  $\tfrac{d}{dR}R^{-1/2}\asymp M^{3}$ at $R\asymp M^{-2}$, an $O(M^{-14})$
  truncation of $R_{N_k}$ would perturb $1/\sqrt{R_{N_k}}$ at order
  $M^{-11}$---the order of the last term of~\eqref{eq:key-expansion}---and
  produce the wrong coefficient there; carrying $R_{N_k}$ to $O(M^{-16})$
  is exactly what pins the $-217727/3024$.

  \emph{Inversion.}  Write
  $E(M)=M-\tfrac{1}{6M^3}+\tfrac{1}{3M^5}-\tfrac{167}{120M^7}
  +\tfrac{17}{2M^9}-\tfrac{217727}{3024M^{11}}$ for the target expansion.
  After the substitution $M^2=2N^2+2N+1$, the two squared inequalities
  \[
    \Bigl(E(M)-\tfrac{c}{M^{13}}\Bigr)^{2}\le \frac{1}{\varphi(N+1)}
    \le \Bigl(E(M)+\tfrac{c}{M^{13}}\Bigr)^{2}
  \]
  become rational inequalities in $N$ that hold for all $N\ge 3$ by
  nonnegative-coefficient certificates (shift $N=3+t$, $t\ge 0$).  These
  place $1/\sqrt{\varphi(N+1)}$ within $c\,M^{-13}$ of $E(M)$; a Lipschitz
  estimate for $x\mapsto x^{-1/2}$ transfers the bound across the
  enclosure~\eqref{eq:enclosure} from $\varphi(N+1)$ to $R_{N_k}$, giving
  \[
    \Bigl|\frac{1}{\sqrt{R_{N_k}}}-E(M)\Bigr|\le\frac{C}{M^{13}}
  \]
  with an explicit constant $C$.  This is~\eqref{eq:key-expansion}.
\end{proof}

\begin{remark}
  The cancellation of all $\sqrt{2}$ terms in~\eqref{eq:R-expansion}
  is the algebraic signature of the Pell indices: $Q_{2k+1}^2 = 2M_k^2-1$
  means $Q$ satisfies a quadratic over $\Z$, so after one substitution
  all $Q^2$ terms become rational, and it happens that the resulting
  rational expansion of $R_{N_k}$ has no $M^{-1}$ or odd-power terms.
  For generic $N$ this rationality fails completely.
\end{remark}

\section{The continued fraction chain}\label{sec:chain}

Set $\varepsilon_k = M_k - 1/\sqrt{R_{N_k}}$, which by
Lemma~\ref{lem:key} satisfies
\begin{equation}\label{eq:eps}
  \varepsilon_k = \frac{1}{6M^3} - \frac{1}{3M^5} + \frac{167}{120M^7}
                - \frac{17}{2M^9} + \frac{217727}{3024\,M^{11}} + O(M^{-13}),
  \qquad M = M_k.
\end{equation}
All six partial quotients in Theorem~\ref{thm:main} are derived by
the following chain of floor-and-invert steps, using~\eqref{eq:eps} and
denoting $u = 1/M$ throughout.

\begin{proof}[Proof of Theorem~\ref{thm:main}: the CF chain]
  We write $\varepsilon = \varepsilon_k$ and $\delta_n$ for successive
  remainders.

  \medskip
  \noindent\textbf{Step~0: $\PQ_0 = M-1$.}
  From~\eqref{eq:eps}, $\varepsilon > 0$ and $\varepsilon < 1/6 < 1$
  for all $M \geq 1$, so $1/\sqrt{R_{N_k}} = M - \varepsilon \in (M-1, M)$.

  \medskip
  \noindent\textbf{Step~1: $\PQ_1 = 1$.}
  Set $\delta_1 = 1/\sqrt{R_{N_k}} - (M-1) = 1 - \varepsilon$.
  Since $\varepsilon < 1/2$ for $M \geq 2$, we have $\delta_1 > 1/2$,
  so $1/\delta_1 \in (1, 2)$.

  \medskip
  \noindent\textbf{Step~2: $\PQ_2 = 6M^3+12M-2$.}
  Set $\delta_2 = 1/\delta_1 - 1 = \varepsilon/(1-\varepsilon)$.
  Writing $\varepsilon = (u^3/6)(1 - 2u^2 + \frac{167}{20}u^4 - \cdots)$:
  \[
    \frac{1}{\delta_2} = \frac{1-\varepsilon}{\varepsilon}
    = \frac{1}{\varepsilon} - 1
    = \frac{6}{u^3}\!\left(1+2u^2+\Bigl(4-\tfrac{167}{20}\Bigr)u^4+\cdots\right) - 1
    = \frac{6}{u^3}+\frac{12}{u}-1-\frac{261u}{10}+O(u^3).
  \]
  Thus $1/\delta_2 = 6M^3+12M-1-(261/10M)+O(M^{-3})$.
  Since the coefficient $-261/10$ at $u = 1/M$ is negative, we have
  $1/\delta_2 < 6M^3+12M-1$ for all $M \geq 1$.
  Since $261/(10M) < 1$ for $M \geq 27$ (and $M_k \geq 169$ for $k \geq 3$),
  we have $1/\delta_2 > 6M^3+12M-2$, giving $\PQ_2 = 6M^3+12M-2$.

  \medskip
  \noindent\textbf{Step~3: $\PQ_3 = 1$.}
  Set $\delta_3 = 1/\delta_2 - (6M^3+12M-2)$.  From the expansion above,
  $\delta_3 = 1 - (261/10M) + O(M^{-3})$, so $1/\delta_3 = 1 + (261/10M)
  + O(M^{-2}) \in (1, 2)$ for $M \geq 27$.

  \medskip
  \noindent\textbf{Step~4: $\PQ_4 = \floor{(10M-261)/261}$.}
  Set $\delta_4 = 1/\delta_3 - 1 = (261/10M) + O(M^{-2})$.
  Carrying the computation to the next order from~\eqref{eq:eps}:
  \[
    \frac{1}{\delta_4}
    = \frac{10M}{261} - 1 + \frac{5120}{22707M} + O(M^{-3}).
  \]
  Therefore $\PQ_4 = \floor{10M/261 - 1 + O(M^{-1})} = \floor{(10M-261)/261}$,
  since the correction $(5120/22707)/M$ is positive but less than $1$ for all
  $M \geq 1$.  The floor is pinned because $\delta_4$ lies within $1/261$ of
  $(10M-261)/261$, whose fractional part $r_k/261$ is bounded away from every
  integer by $\ge 1/261$ (as $1 \le r_k \le 260$).

  \medskip
  \noindent\textbf{Step~5: $\PQ_5$.}\quad
  Write $M = 261q + s$ where $s = M \bmod 261$, so
  $10M \bmod 261 = 10s \bmod 261 =: r$.  We have
  $10M - 261 = 261(10q + \floor{10s/261}) + r - 261$, so
  $\floor{(10M-261)/261} = 10q + \floor{(10s-261)/261}$, and
  \[
    \delta_5
    := \frac{1}{\delta_4} - \PQ_4
    = \frac{r}{261} + \frac{5120}{22707M} + O(M^{-3}),
  \]
  where $r = (10M) \bmod 261 \in \{1,\ldots,260\}$ (we show $r \neq 0$
  below).  Hence
  \[
    \frac{1}{\delta_5}
    = \frac{261}{r} - \frac{15360}{r^2 M} + O(M^{-2}).
  \]

  \emph{Case 1: $r \nmid 261$.}  Write $261 = qr + \rho$ with
  $\rho = 261 \bmod r \in \{1,\ldots,r-1\}$.  Then $261/r = q + \rho/r$
  and the fractional part is $\rho/r$.  The condition
  $\PQ_5 = q = \floor{261/r}$ holds provided $15360/(r^2 M) < \rho/r$,
  i.e.\ $M > 15360/(r\rho)$.  Over the period-$60$ orbit of
  $r_k = (10M_k) \bmod 261$ (Remark~\ref{rem:rk-orbit}) one has
  $15360/(r_k \rho_k) \le 1536$.  For $k \ge 5$, $M_k \ge 5741 > 1536$, so the
  inequality is immediate; the only remaining indices are $k = 3, 4$, where
  $(r_k,\rho_k) = (124,13), (193,68)$ give $15360/(r_k\rho_k) < 10 < M_k$.
  Hence $M_k > 15360/(r_k\rho_k)$ for every $k \ge 3$, and
  $\PQ_5 = \floor{261/r_k}$.

  \emph{Case 2: $r \mid 261$.}  The only divisor of $261 = 9 \times 29$ in the
  orbit of $r_k$ is $r_k = 29$, occurring at $k \equiv 57, 62 \pmod{60}$
  (Remark~\ref{rem:rk-orbit}); at these indices $M_k \ge M_{57}$, so the
  correction $15360/(r_k^2 M_k) + O(M_k^{-2})$ lies in $(0,1)$.  Since $261/r_k$
  is an integer, $1/\delta_5 = 261/r_k - 15360/(r_k^2 M_k) + O(M_k^{-2}) \in
  (261/r_k - 1,\, 261/r_k)$, giving $\PQ_5 = 261/r_k - 1 = \floor{261/r_k} - 1$.

  \emph{$r_k \neq 0$:}  We need $(10M_k) \bmod 261 \neq 0$,
  i.e.\ $261 \nmid 10M_k$, i.e.\ $9 \nmid M_k$ and $29 \nmid M_k$.
  From the Pell sequence modulo $9$ (period dividing $12$) and modulo
  $29$ (period dividing $28$), one verifies that $P_{2k+1} \not\equiv 0$
  modulo $9$ or $29$ for any $k$; this is a finite check.
  \qed
\end{proof}

\begin{remark}\label{rem:rk-orbit}
  Since $M_{k+1} = 6M_k - M_{k-1}$ (the Pell recurrence $P_{n+2} =
  6P_n - P_{n-2}$ for odd indices), the sequence $r_k = (10M_k) \bmod 261$
  satisfies $r_{k+1} \equiv 6r_k - r_{k-1} \pmod{261}$, and is therefore
  periodic with period dividing the period of the Pell sequence modulo~$261$.
  The latter period is $60$, and the orbit of $r_k$ starting from
  $r_3 = (10 \times 169) \bmod 261 = 124$ and $r_4 = 193$ has period~$60$.
  The value $r_k = 29$ occurs at $k \equiv 57$ and $k \equiv 62 \pmod{60}$.
\end{remark}

\section{A heuristic derivation via Gosper's algorithm}
\label{sec:gosper}

\subsection{Setup}

Gosper's bihomographic algorithm~\cite{Gosper1972} computes the
continued fraction of $z = f(x,y)$ from the continued fractions of
$x$ and $y$, maintaining a $2\times2\times2$ integer tensor representing
\[
  z = \frac{axy+bx+cy+d}{exy+fx+gy+h}.
\]
Ingesting partial quotient $p$ from $x$ (substituting $x \mapsto p+1/x'$
and clearing the denominator $x'$) transforms
$(a,b,c,d,e,f,g,h) \mapsto (ap+c,\,bp+d,\,a,\,b,\,ep+g,\,fp+h,\,e,\,f)$;
ingesting $q$ from $y$ transforms it to
$(aq+b,\,a,\,cq+d,\,c,\,eq+f,\,e,\,gq+h,\,g)$.  (Both rules are obtained
directly from the displayed form of $z$.)

We apply this to
\[
  z = \frac{1}{\sqrt{R_N}} = \frac{1}{x \cdot y},
  \qquad
  x = g(N) := \sqrt{R_N/2}, \quad y = \sqrt{2},
\]
starting from the initial tensor $(0,0,0,1;\,1,0,0,0)$ representing $1/(xy)$.

\subsection{The quasi-polynomial continued fraction of $g$}

The Euler--Maclaurin expansion of $g(N) = \sqrt{R_N/2}$ yields a
formal Laurent series in $1/N$ with rational coefficients.  Applying
the Euclidean continued-fraction algorithm to this series suggests the
following quasi-polynomial pattern for the partial quotients of $g$.
We state it as a conjecture: it is the heuristic output of the formal
algorithm and, unlike the six quotients of Theorem~\ref{thm:main}, it is
\emph{not} established rigorously at the smallest indices; see the
discussion below and Open Problem~\ref{op:gcf}.

\begin{conjecture}\label{prop:gcf}
  The formal continued fraction of $g(N) = \sqrt{R_N/2}$ is
  \begin{equation}\label{eq:gcf}
    g(N) = \Bigl[0;\; 2N+1,\; 4N+2,\;
            \Bigl\lfloor\frac{12N+6}{19}\Bigr\rfloor,\; \ldots\Bigr]
  \end{equation}
  with quasi-polynomial partial quotients of periods $1, 1, 19, \ldots$
  In particular, at $N = N_k$:
  \begin{equation}\label{eq:gcf-pell}
    \PQ_1(g)\big|_{N_k} = 2N_k+1 = Q_{2k+1}, \qquad
    \PQ_2(g)\big|_{N_k} = 4N_k+2 = 2Q_{2k+1}.
  \end{equation}
\end{conjecture}

\begin{remark}[Heuristic derivation and formalization status]
  The leading terms of $g(N)$ are $1/(2N) + 1/(4N^2) + \cdots$, so
  $1/g = 2N+1 + O(N^{-1})$ with integer part $2N+1$ (the constant $+1$
  reflecting the positivity of the next Euler--Maclaurin coefficients);
  continuing, $\PQ_2 = 4N+2$ and $\PQ_3 = \floor{(12N+6)/19}$ emerge from
  the formal algorithm, and formula~\eqref{eq:gcf-pell} is immediate from
  $N_k = (Q_{2k+1}-1)/2$.  This derivation is only formal: the uniform
  Euler--Maclaurin remainder used to prove Theorem~\ref{thm:main} (the
  bound of Lemma~\ref{lem:key}) is not tight enough to pin these floors at
  the smallest index $k = 1$ ($M_k = 5$).  In the accompanying Lean
  development the statement of~\eqref{eq:gcf-pell} for the first two partial
  quotients of $g$ is recorded but left with a single \texttt{sorry}
  (declaration \texttt{prop\_gcf}); it is the sole unproved claim in the
  formalization and is recorded as Open Problem~\ref{op:gcf}.
\end{remark}

\subsection{The algebraic setting}

At $N = N_k$ we work in the ring
\[
  \mathcal{R} = \Z[M, Q, Y]\big/\bigl(Q^2-2M^2+1,\; Y^2-2Y-1\bigr),
\]
where $Q = Q_{2k+1}$, $M = M_k$, and $Y = 1+\sqrt{2}$ is the periodic
tail of $\sqrt{2} = [1;2,2,2,\ldots]$, satisfying $Y = 2 + 1/Y$.

\begin{proposition}\label{prop:gosper-state}
  After ingesting $a_0(g)=0$, $b_0(\sqrt{2})=1$, $a_1(g)=Q$ in
  sequence, the Gosper tensor state is
  \begin{equation}\label{eq:state}
    z = \frac{Q\,x_r Y + Y}{x_r Y + x_r}
      = \frac{Y\,(Q\,x_r + 1)}{x_r\,(Y+1)},
  \end{equation}
  where $x_r$ is the tail of $g$ remaining after the first two
  partial quotients.
\end{proposition}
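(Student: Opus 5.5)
The proof is a direct bookkeeping computation with the two ingestion rules of the Setup subsection, applied three times to the initial tensor $(a,b,c,d;e,f,g,h)=(0,0,0,1;\,1,0,0,0)$, which represents $z=1/(xy)$. After each step I will check the resulting tensor against the direct substitution into the rational function, so that each step is verified two ways. The statement then follows by reading off the final tensor and factoring.

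\emph{Step 1: ingest $a_0(g)=0$ from $x$.} Put $p=0$ in the $x$-rule $(ap+c,\,bp+d,\,a,\,b,\,ep+g,\,fp+h,\,e,\,f)$. This gives $(0,1,0,0;\,0,0,1,0)$, i.e.\ $z=x'/y$. This agrees with substituting $x=1/x'$ into $1/(xy)$.

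\emph{Step 2: ingest $b_0(\sqrt2)=1$ from $y$.} Put $q=1$ in the $y$-rule $(aq+b,\,a,\,cq+d,\,c,\,eq+f,\,e,\,gq+h,\,g)$. This gives $(1,0,0,0;\,0,0,1,1)$, i.e.\ $z=x'y'/(y'+1)$. The point to record here is that the tail $y'$ is \emph{exactly} $Y=1+\sqrt2$, because $\sqrt2=1+1/(1+\sqrt2)$. This is the relation $Y=2+1/Y$, equivalently $Y^2-2Y-1=0$ in $\mathcal R$. Hence $y'=Y$ with no further approximation.

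\emph{Step 3: ingest $a_1(g)=Q$ from $x$.} Put $p=Q$ in the $x$-rule. This gives $(Q,0,1,0;\,1,1,0,0)$, i.e.
\[
z=\frac{Q\,x_rY+Y}{x_rY+x_r}.
\]
Factoring $Y$ from the numerator and $x_r$ from the denominator gives the second form in~\eqref{eq:state}. As a final check, substituting $g=1/(Q+1/x_r)$ and $\sqrt2=(Y+1)/Y$ into $1/(g\sqrt2)$ reproduces the same expression.

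The only real subtlety is the meaning of $x_r$ and the status of the input $a_1(g)=Q$, which comes from Conjecture~\ref{prop:gcf}. I would state the proposition as an exact identity with $x_r$ \emph{defined} by $x_r:=1/(1/g-Q)$. The computation above is then a pure algebraic identity in $\mathcal R$ (together with $x_r$), valid whether or not $Q$ really is the first partial quotient of $g$. The identification of $x_r$ as the genuine continued-fraction tail ($x_r>1$) is exactly what the conjecture supplies, and I would flag it as the heuristic ingredient rather than try to prove it here.
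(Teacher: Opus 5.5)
Your proposal is correct and follows the paper's proof essentially verbatim. You apply the ingestion rules three times and get the same intermediate tensors $(0,1,0,0;\,0,0,1,0)$, $(1,0,0,0;\,0,0,1,1)$ and $(Q,0,1,0;\,1,1,0,0)$, then make the same direct check against $1/(g\sqrt2)$. Your remark that defining $x_r := 1/(1/g-Q)$ makes the identity hold regardless of Conjecture~\ref{prop:gcf} matches the paper's observation that the check holds for all tails $x_r, Y$.
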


\begin{proof}
  Direct application of the ingest operations to $(0,0,0,1;\,1,0,0,0)$:
  ingesting $0$ from $x$ gives $(0,1,0,0;\,0,0,1,0)$, ingesting $1$ from
  $y$ gives $(1,0,0,0;\,0,0,1,1)$, and ingesting $Q$ from $x$ gives
  $(Q,\,0,\,1,\,0;\;1,\,1,\,0,\,0)$, which represents~\eqref{eq:state}.
  A direct check confirms that the value~\eqref{eq:state} equals $1/(xy)$
  for all tails $x_r, Y$.
\end{proof}

Identity~\eqref{eq:state} expresses $1/\sqrt{R_{N_k}}$ in closed form
in terms of the exact tail $x_r$ of $g$---whose leading partial quotient
is $2Q$ by~\eqref{eq:gcf-pell}---and the quadratic surd $Y = 1+\sqrt{2}$.
In principle $\PQ_0$ is obtained by emitting the integer part of this
ratio once $x_r$ is resolved to sufficient depth.

The heuristic content is the following.  Substituting the leading value
$x_r = 2Q$ into~\eqref{eq:state} and reducing $Q^2 \to 2M^2-1$ gives a
ratio of the form
\[
  \frac{(4M^2-1)\,Y}{2Q\,(Y+1)},
  \qquad Q = \sqrt{2M^2-1},\ \ Y = 1+\sqrt{2},
\]
whose magnitude is $\approx M$, correctly identifying the scale of
$\PQ_0$.  Pinning the value to the unit interval $(M-1, M)$, however,
is delicate: the ratio in~\eqref{eq:state} is sensitive to the tail, and
the continuation of $x_r$ beyond its leading term $2Q$---though it
perturbs $x_r$ by only $O(1)$---shifts the emitted quotient by an
$O(1)$ amount.  The Gosper computation therefore exposes the mechanism
by which the Pell tail $2Q$ and the $\sqrt{2}$-tail $Y$ combine to
produce a partial quotient of size $M$, but a rigorous determination of
$\PQ_0 = M_k - 1$ is cleaner through the rationalization argument of
Section~\ref{sec:rational}, to which we now turn.

\begin{remark}
  With the leading tail approximation $x_r = 2Q$, the Gosper-state
  ratio $(4M^2-1)Y/(2Q(Y+1))$ is $\approx M$, of the correct scale to
  emit $\PQ_0$, but does not by itself lie in $(M-1, M)$: the tail
  correction dropped by the substitution $x_r = 2Q$ shifts it by an
  $O(1)$ amount.  The rigorous determination of $\PQ_0 = M_k - 1$ is the
  rationalization argument of the next section.
\end{remark}

\section{Proof of $\PQ_0 = M_k-1$: rationalization}
\label{sec:rational}

\subsection{The rational series}

The Euler--Maclaurin expansion of $1/\sqrt{R_N}$ isolates a single
irrational factor:
\begin{equation}\label{eq:fdef}
  \frac{1}{\sqrt{R_N}} = \sqrt{2}\cdot f\!\Bigl(\frac{1}{N}\Bigr),
  \qquad
  f(u) = \frac{1}{u} + \frac{1}{2} + \frac{u}{8} - \frac{u^2}{16}
         + \cdots,
\end{equation}
where every coefficient of $f$ is rational; equivalently
$f(1/N) = 1/\sqrt{2R_N}$.  For a Pell convergent $Q/M$ with
$Q^2 - 2M^2 = -1$, define the \emph{rationalized approximation}
\begin{equation}\label{eq:zdef}
  z_{Q/M}(N) \;:=\; \frac{Q}{M}\, f\!\Bigl(\frac{1}{N}\Bigr)
             \;=\; \frac{Q}{M\sqrt{2}}\cdot\frac{1}{\sqrt{R_N}},
\end{equation}
the second equality being exact by~\eqref{eq:fdef}.  Because $Q/M$ is a
convergent to $\sqrt{2}$, the scalar $Q/(M\sqrt{2})$ is close to $1$; the
Pell identity makes this precise:
\begin{equation}\label{eq:gap}
  \Bigl(\frac{Q}{M\sqrt{2}}\Bigr)^{\!2}
  = \frac{Q^2}{2M^2} = \frac{2M^2-1}{2M^2} = 1 - \frac{1}{2M^2},
  \qquad
  \Bigl|\sqrt{2}-\frac{Q}{M}\Bigr| = \frac{1}{M\,(Q+M\sqrt{2})} = O(M^{-2}).
\end{equation}

\subsection{The rationalized value at Pell indices}

\begin{proposition}\label{prop:rat-value}
  For all $k \geq 3$ we have $z_{Q/M}(N_k) \in (M_k-1,\, M_k)$; in
  particular $\PQ_0\bigl(z_{Q/M}(N_k)\bigr) = M_k - 1$.  (The two smaller
  indices $k = 1, 2$ satisfy the same conclusion by direct numerical
  evaluation; the threshold $k \ge 3$ is the regime in which the uniform
  remainder of Lemma~\ref{lem:key} is sharp, and is exactly the range over
  which the statement is machine-checked in the accompanying Lean
  development as \texttt{prop\_rat\_value}.)
\end{proposition}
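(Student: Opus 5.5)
The plan is to work directly from the exact identity \eqref{eq:zdef}, $z_{Q/M}(N_k) = \lambda\cdot R_{N_k}^{-1/2}$ with $\lambda := Q/(M\sqrt{2})$. This turns the statement into two separate facts: a two-sided bound on the scalar $\lambda$, which comes from the Pell identity, and a two-sided bound on $R_{N_k}^{-1/2}$, which comes from Lemma~\ref{lem:key}. By \eqref{eq:gap}, $\lambda = \sqrt{1-1/(2M^2)}$ exactly, so $0<\lambda<1$. Since $\sqrt{1-x}\ge 1-x$ on $[0,1]$, we also get $\lambda \ge 1 - 1/(2M^2)$. No $\sqrt2$-bookkeeping is needed beyond this, because the irrational factor sits entirely inside $\lambda$.

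For the upper bound, write $R_{N_k}^{-1/2} = M-\varepsilon_k$ as in \eqref{eq:eps}. It then suffices to show $\varepsilon_k>0$, since that gives $z = \lambda(M-\varepsilon_k) < M-\varepsilon_k < M$. Positivity of $\varepsilon_k$ follows from Lemma~\ref{lem:key}. The expansion gives
\[
\varepsilon_k \ge \frac{1}{6M^3}-\frac{1}{3M^5}-\frac{17}{2M^9}-\frac{C}{M^{13}},
\]
after dropping the positive terms at orders $M^{-7}$ and $M^{-11}$. This is positive once $M^{10}$ dominates $6C$ and the small lower-order terms, which holds for $M\ge M_3=169$ for any reasonable explicit $C$. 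For the lower bound, I will use $\lambda\ge 1-1/(2M^2)$ and $\varepsilon_k\le 1/(6M^3)+O(M^{-7})$ to get
\[
z \ge \Bigl(1-\frac{1}{2M^2}\Bigr)(M-\varepsilon_k) \ge M - \frac{1}{2M} - \varepsilon_k > M-1,
\]
since $1/(2M)+\varepsilon_k<1$ comfortably for $M\ge169$. So $z\in(M-1,M)$, and then $\PQ_0(z)=M-1$ is immediate. Heuristically, $z = M - 1/(4M) + O(M^{-3})$, which sits well inside the unit interval.

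The only step that needs care is making the sign $\varepsilon_k>0$ rigorous with an explicit constant, because Lemma~\ref{lem:key} only provides ``an explicit constant $C$''. If that constant turns out to be awkwardly large, I would bypass the lemma and argue directly from the enclosure \eqref{eq:enclosure}. The claim $1/\sqrt{R_N}<M$ is equivalent to $R_N>1/M^2 = 1/(2N^2+2N+1)$. Since $R_N>\varphi(N+1)$, it is enough to prove the rational inequality $\varphi(N+1)(2N^2+2N+1)>1$ for $N\ge N_3=119$. I would check this by clearing denominators, shifting $N=3+t$, and exhibiting nonnegative coefficients, exactly as in the inversion step of the proof of Lemma~\ref{lem:key}. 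The leading behaviour is $(1/(2N^2))(2N^2)\cdot(1+O(N^{-2}))$ with a positive correction reflecting the $+1/(3M^6)$ term in \eqref{eq:R-expansion}, so I expect the certificate to exist.

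The lower bound needs only the crude estimate $R_N\le\varphi(N+1)+13858000N^{-20}$, which is far from tight at $N\ge119$.
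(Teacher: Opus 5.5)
Your proposal is correct and is essentially the paper's proof. Both write $z_{Q/M}(N_k)=\sqrt{1-1/(2M^2)}\,(M-\varepsilon_k)$, get the upper bound from $0<\sqrt{1-1/(2M^2)}<1$ with $\varepsilon_k>0$, and get the lower bound from $\sqrt{1-1/(2M^2)}>1-1/(2M^2)$ with $\varepsilon_k+1/(2M)<1$. Where you go further is in justifying $\varepsilon_k>0$, either by checking that the constant $C$ of Lemma~\ref{lem:key} is dominated at $M\ge169$, or by the direct certificate $\varphi(N+1)(2N^2+2N+1)>1$ (true, since the left side is $1+\tfrac{1}{12(N+1)^4}+\cdots$); this is more careful than the paper, which only cites Step~0's claim that $0<\varepsilon_k<1/6$ ``for all $M\ge1$''.
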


\begin{proof}
  Write $Q = Q_{2k+1}$, $M = M_k$.  By~\eqref{eq:zdef} and~\eqref{eq:gap},
  \[
    z_{Q/M}(N_k) = \sqrt{1-\tfrac{1}{2M^2}}\;\cdot\;\frac{1}{\sqrt{R_{N_k}}}.
  \]
  By Step~0 of the chain in Section~\ref{sec:chain}, $1/\sqrt{R_{N_k}}
  = M - \varepsilon_k$ with $0 < \varepsilon_k < \tfrac16$
  (from~\eqref{eq:eps}).  Since $0 < \sqrt{1-1/(2M^2)} < 1$, the upper
  bound $z_{Q/M}(N_k) < M-\varepsilon_k < M$ is immediate.  For the lower
  bound, $\sqrt{1-1/(2M^2)} > 1 - 1/(2M^2)$, so
  \[
    z_{Q/M}(N_k) > \Bigl(1-\tfrac{1}{2M^2}\Bigr)(M-\varepsilon_k)
    > M - \varepsilon_k - \frac{1}{2M} > M-1,
  \]
  the final inequality using $\varepsilon_k + 1/(2M) < \tfrac16 + \tfrac12 < 1$.
  Hence $z_{Q/M}(N_k) \in (M-1, M)$.
\end{proof}

\subsection{Conclusion}

By~\eqref{eq:zdef} the true value and its rationalization differ only by
the scalar $Q/(M\sqrt{2}) = \sqrt{1-1/(2M^2)} \in (1-1/(2M^2),\,1)$, so
both occupy the same unit interval.

\begin{proof}[Proof of $\PQ_0 = M_k - 1$]
  From~\eqref{eq:eps}, $1/\sqrt{R_{N_k}} = M_k - \varepsilon_k$ with
  $\varepsilon_k \in (0,1)$, so $1/\sqrt{R_{N_k}} \in (M_k-1, M_k)$ and
  $\PQ_0(1/\sqrt{R_{N_k}}) = M_k - 1$.  Proposition~\ref{prop:rat-value}
  reaches the same conclusion for the rationalized value $z_{Q/M}(N_k)$
  without recourse to the higher-order expansion: it uses only the Pell
  identity $Q^2 = 2M^2-1$ and the leading behaviour of $f$.
\end{proof}

\begin{remark}
  The rationalization makes the role of the Pell indices transparent.
  Writing $1/\sqrt{R_N} = \sqrt{2}\,f(1/N)$ separates the problem into the
  rational series $f$ and the single irrational factor $\sqrt{2}$.  At
  $N = N_k$ the leading behaviour is
  \[
    z_{Q/M}(N_k) \approx \frac{Q}{M}\cdot\frac{2N_k+1}{2}
    = \frac{Q^2}{2M} = \frac{2M^2-1}{2M} = M - \frac{1}{2M},
  \]
  using $2N_k+1 = Q$ and $Q^2 = 2M^2-1$.  The Pell identity thus places
  the rationalized value a distance $1/(2M)$ below the integer $M$---inside
  $(M-1, M)$, but only just---and the exact factorisation~\eqref{eq:zdef}
  with the sharp gap~\eqref{eq:gap} transfers this to $1/\sqrt{R_{N_k}}$
  itself.  The special feature of the Pell indices is precisely that
  $Q^2/(2M)$ lands in $(M-1, M)$, pinning $\PQ_0 = M_k - 1$.
\end{remark}

\section{Generalisations and open problems}
\label{sec:gen}

\subsection{Why $\zeta(3)$?}

The Euler--Maclaurin tail satisfies $R_N^{(s)} \sim c_s N^{-(s-1)}$,
so $1/\sqrt{R_N^{(s)}} \sim c'_s N^{(s-1)/2}$.  Linearity in $N$
requires $(s-1)/2 = 1$, i.e.\ $s = 3$.  For other odd $s$ the leading
growth is a different power of $N$, and the Pell approximation (which
is to $\sqrt{2}$) produces resonances at different rates; the cubic
emission mechanism breaks down.

\subsection{Further partial quotients}

The chain of Section~\ref{sec:chain} continues: with each additional
pair of terms in expansion~\eqref{eq:eps} one proves two further partial
quotients.  The pattern is:
\begin{itemize}
  \item Odd-indexed PQs tend to be $1$ ($\PQ_1 = \PQ_3 = 1$), the first
    exception being the anomalous $\PQ_5 = \floor{261/r_k}$.
  \item Even PQs grow: $\PQ_2 = O(M^3)$, $\PQ_4 = O(M)$, $\PQ_6 = O(M)$, \ldots,
    with the growth rate determined by the leading coefficient of the
    remainder at each stage.
\end{itemize}

\subsection{Open problems}

\begin{enumerate}
  \item \textit{The continued fraction of $g = \sqrt{R_N/2}$.}\label{op:gcf}
    Conjecture~\ref{prop:gcf} predicts the quasi-polynomial partial quotients
    $[0; 2N+1, 4N+2, \floor{(12N+6)/19}, \ldots]$ of $g(N) = \sqrt{R_N/2}$,
    the object fed to Gosper's algorithm in Section~\ref{sec:gosper}.  The
    derivation is only formal: the uniform Euler--Maclaurin remainder of
    Lemma~\ref{lem:key} does not pin the leading floors at the smallest index
    $k = 1$ ($M_k = 5$), so even the first two quotients
    $\PQ_1(g)|_{N_k} = Q_{2k+1}$, $\PQ_2(g)|_{N_k} = 2Q_{2k+1}$ are not proved
    there.  This is the sole statement of the paper left unproved in the
    accompanying Lean formalization (the single \texttt{sorry}, declaration
    \texttt{prop\_gcf}).  A sharper index-dependent remainder, or a direct
    treatment of the small cases, would settle it.

  \item \textit{$\PQ_6$ and beyond.}
    The next even partial quotient, $\PQ_6$, is governed by the coefficient
    $c_{13} = 290341/360$ of $M_k^{-13}$ in~\eqref{eq:key-expansion}, and its
    determination requires extending the chain of Section~\ref{sec:chain} by
    one further pair of floor-and-invert steps.  The sixth quotient
    $\PQ_5 = \floor{261/r_k} + \epsilon_k$ is itself far from constant over
    the period: it equals $1$ for exactly half of the indices (the $30$ of
    $60$ with $r_k \ge 131$) and ranges up to $32$ (attained at $r_k = 8$).
    A description of the full quasi-periodic structure of the partial
    quotients beyond $\PQ_5$ remains open.

  \item \textit{Irrationality and transcendence.}
    The partial quotients of $\sqrt{R_{N_k}}$ give rational approximations
    to $\sqrt{\zeta(3) - S_{N_k}}$.  Do these lead to new irrationality
    or transcendence results for $\zeta(3)$?

  \item \textit{Ap\'ery connection.}
    The Pell indices $N_k$ are entirely different from Ap\'ery's indices;
    understanding any relationship between the two families of rational
    approximations to quantities related to $\zeta(3)$ remains open.
\end{enumerate}


\end{document}